\DeclareMathSymbol{\Pa}{7}{cmbr}{"50}
\DeclareMathSymbol{\Hk}{7}{cmbr}{"48}
\tikzstyle{sq}=[rectangle, draw, thick, minimum width=.5cm, minimum height=.5cm]
\tikzstyle{esq}=[rectangle, thick, minimum width=.5cm, minimum height=.5cm]
\setlist[enumerate,1]{label=(\alph*)}
\title{A Note on Skew Characters of Symmetric Groups}
\author{Jay Taylor}
\address{Dipartimento di Matematica, Universit\'a di Padova, Via Trieste 63, 35121 Padova, Italy.}
\email{taylor@math.unipd.it}
\keywords{Symmetric groups, representation theory, skew characters, Murnaghan--Nakayama formula.}
\begin{document}
\begin{abstract}
In previous work Regev used part of the representation theory of Lie superalgebras to compute the values of a character of the symmetric group whose decomposition into irreducible constituents is described by semistandard $(k,\ell)$-tableaux. In this short note we give a new proof of Regev's result using skew characters.
\end{abstract}

\section{Introduction}
\begin{pa}\label{pa:sum-of-hooks}
For any partition $\alpha \in \Pa(n)$ of an integer $n \geqslant 0$ we have a corresponding irreducible character $\chi_{\alpha}$ of the symmetric group $\mathfrak{S}_n$; we assume this labelling is as in \cite[7.4]{macdonald:1995:symmetric-functions-and-hall-polys}. In \cite{regev:2013:Lie-superalgebras-and-some-characters-of-Sn} Regev observed that the values of the character $\Gamma_n = \sum_{a=0}^n \chi_{(a,1^{n-a})}$ obtained by summing over all hook partitions were particularly simple. Specifically if $\nu \in \Pa_r(n)$ is a partition of length $r$ then we have
\begin{equation*}
\Gamma_n(\nu) = \begin{cases}
2^{r-1} &\text{if all parts of }\nu\text{ are odd},\\
0 &\text{otherwise}.
\end{cases}
\end{equation*}
Here we write $\Gamma_n(\nu)$ for the value of $\Gamma_n$ at an element of cycle type $\nu$.
\end{pa}

\begin{pa}
To prove this result Regev considered a more general but related problem which we now recall. For any integers $k,\ell\geqslant 0$ and any partition $\alpha \in \Pa(n)$ we denote by $s_{k,\ell}(\alpha)$ the number of all semistandard $(k,\ell)$-tableaux of shape $\alpha$, see \cref{pa:ss-kl-tabs} for the definition. Motivated by the representation theory of Lie superalgebras Regev considered the following character of $\mathfrak{S}_n$
\begin{equation*}
\Lambda_n^{k,\ell} = \sum_{\alpha \in \Pa(n)} s_{k,\ell}(\alpha)\chi_{\alpha}.
\end{equation*}
The main result of \cite{regev:2013:Lie-superalgebras-and-some-characters-of-Sn} is the following.
\end{pa}

\begin{thm}[Regev]\label{thm:main}
If $\nu = (\nu_1,\dots,\nu_r) \in \Pa_r(n)$ is a partition of length $r$ then
\begin{equation*}
\Lambda_n^{k,\ell}(\nu) = \prod_{i=1}^r (k + (-1)^{\nu_i - 1}\ell).
\end{equation*}
\end{thm}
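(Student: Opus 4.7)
The plan is to realize $\Lambda_n^{k,\ell}$ as an honest sum of skew characters of $\mathfrak{S}_n$ and then apply the Murnaghan--Nakayama rule to each. The starting point is the identity
\[
\sum_\alpha s_{k,\ell}(\alpha)s_\alpha(z) = \prod_j(1-z_j)^{-k}(1+z_j)^\ell = H(z)^k E(z)^\ell,
\]
which follows from splitting a $(k,\ell)$-semistandard tableau into its unprimed SSYT of some shape $\mu\subseteq\alpha$ and its primed complement on $\alpha/\mu$. Under the Frobenius characteristic $\mathrm{ch}\colon\bigoplus_n R(\mathfrak{S}_n)\to\Lambda$, this writes $\Lambda_n^{k,\ell}$ as the sum, over $(a_1,\dots,a_k,b_1,\dots,b_\ell)\in\mathbb{Z}_{\geqslant 0}^{k+\ell}$ with $\sum a_i+\sum b_j=n$, of the characters $\mathrm{ch}^{-1}(h_{a_1}\cdots h_{a_k}e_{b_1}\cdots e_{b_\ell})$.

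Because the skew Schur function of a disconnected shape factors over its connected components, and $s_{(a)}=h_a$, $s_{(1^b)}=e_b$, each such monomial equals $s_{\lambda/\mu}$ for the skew shape consisting of $k$ disjoint rows of lengths $a_1,\dots,a_k$ and $\ell$ disjoint columns of lengths $b_1,\dots,b_\ell$, placed on mutually non-overlapping diagonals. Hence each summand is a bona fide skew character $\chi_{\lambda/\mu}$, and $\Lambda_n^{k,\ell}$ is a finite sum of these.

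The Murnaghan--Nakayama rule computes $\chi_{\lambda/\mu}(\nu)$ as $\sum_T(-1)^{h(T)}$ over border-strip tableaux $T$ of content $\nu$. For the disconnected shape above, such a tableau is simply an assignment of each of the $r$ strips (of sizes $\nu_1,\dots,\nu_r$) to one of the $k+\ell$ components, with the total strip-size in each component equal to its size; a strip in a row is horizontal (contributing $+1$) and a strip of size $m$ in a column is vertical (contributing $(-1)^{m-1}$). Summing over all row and column sizes $(a_i,b_j)$ simultaneously with the MN placements removes these exact-fit constraints, leaving a free assignment of each cycle $i$ to one of $k+\ell$ components---contributing $+1$ if $i$ is placed in a row and $(-1)^{\nu_i-1}$ if in a column. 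Since the assignments for distinct cycles are independent, cycle-by-cycle summation gives
\[
\Lambda_n^{k,\ell}(\nu) = \prod_{i=1}^r\bigl(k+(-1)^{\nu_i-1}\ell\bigr).
\]
The main technical point is the identification in the second paragraph---realising each monomial in $H^kE^\ell$ as the skew Schur function of a disconnected shape; once this is done, the MN rule makes the product structure transparent.
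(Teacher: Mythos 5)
Your proof is correct and follows essentially the same route as the paper: both realize $\Lambda_n^{k,\ell}$ as the sum of skew characters of shapes consisting of $k$ disjoint rows and $\ell$ disjoint columns (you via the super Cauchy identity $\sum_\alpha s_{k,\ell}(\alpha)s_\alpha = H^kE^\ell$, the paper via the Berele--Regev decomposition of $\psi_{(\lambda\mid\mu)}$), and then evaluate by the Murnaghan--Nakayama rule for skew shapes. The only cosmetic difference is that you sum over all border-strip tableaux of all such shapes at once, whereas the paper strips off one part of $\nu$ at a time and inducts; both yield the factor $k+(-1)^{\nu_i-1}\ell$ per cycle.
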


\begin{rem}
Note that $s_{k,\ell}(\alpha) \neq 0$ if and only if $\alpha$ is contained in the $(k,\ell)$-hook, as defined in \cite[2.3]{berele-regev:1987:hook-young-diagrams}. In particular, we have $\Lambda_n^{k,\ell}$ is the same as the character $\chi_{\varphi_{(k,\ell),n}^*}$ defined in \cite{regev:2013:Lie-superalgebras-and-some-characters-of-Sn}.
\end{rem}

\begin{pa}
Using formulas for the coefficients $s_{k,\ell}(\alpha)$ obtained in \cite{berele-regev:1987:hook-young-diagrams} Regev deduces that $\Lambda_n^{1,1} = 2\Gamma_n$ from which the statement of \cref{pa:sum-of-hooks} follows immediately. To prove \cref{thm:main} Regev used results of Berele--Regev on representations of Lie superalgebras \cite{berele-regev:1987:hook-young-diagrams}. However, Regev asks whether this result can be proven entirely in the setting of the symmetric group. The purpose of this note is to provide such a proof. Our proof is based on a description of the character $\Lambda_n^{k,\ell}$ as a sum of skew characters. With this we can use the Murnaghan--Nakayama formula to compute the values of $\Lambda_n^{k,\ell}$ and thus prove \cref{thm:main}. As a closing remark we use our description in terms of skew characters to show that $\Lambda_n^{1,1} = 2\Gamma_n$ using Pieri's rule.
\end{pa}

\begin{rem}
The idea we use in this paper was prompted by a recent question of Marcel Novaes on MathOverflow \cite{mathoverflow:233009}, which is where we also first learned of Regev's work.
\end{rem}

\begin{acknowledgments}
The author gratefully acknowledges the financial support of an INdAM Marie-Curie Fellowship and grants CPDA125818/12 and 60A01-4222/15 of the University of Padova. Furthermore we thank Chris Bowman for sharing his thoughts on an earlier version of this article.
\end{acknowledgments}

\section{The Result}
\begin{pa}
Let $\mathbb{N} = \{1,2,\dots\}$ be the natural numbers and $\mathbb{N}_0 = \mathbb{N}\cup\{0\}$. Throughout we use the term \emph{diagram} to mean a subset of $\mathbb{N}^2$. The notion of connected diagram and connected components of a diagram have their usual natural meanings, see \cite[I, \S1]{macdonald:1995:symmetric-functions-and-hall-polys} for details. A diagram $T$ will be called a \emph{horizontal line}, resp., \emph{vertical line}, if for any $(i,j), (i',j') \in T$ we have $i = i'$, resp., $j = j'$.
\end{pa}

\begin{pa}
For any $k \in \mathbb{N}_0$ we denote by $\C_k$ the set of all compositions $\alpha = (\alpha_1,\dots,\alpha_k) \in \mathbb{N}_0^k$ of length $k$; we call $\alpha_i$ a \emph{part} of $\alpha$. For such a composition we denote by $|\alpha|$ the sum $\alpha_1+\cdots+\alpha_k$ and by $\alpha^{\circ}$ the composition obtained from $\alpha$ by removing all parts equal to 0 but maintaining the original order. If $n \in \mathbb{N}_0$ then we denote by $\Pa_k(n)$ the set of all $\alpha = (\alpha_1,\dots,\alpha_k) \in \C_k$ such that $\alpha_1\geqslant \cdots \geqslant \alpha_k>0$ and $|\alpha| = n$, which are the partitions of $n$ of length $k$. Moreover we denote by $\Pa(n)$ the set $\bigcup_{k \in \mathbb{N}} \Pa_k(n)$ of all partitions of $n$. To each partition $\alpha \in \Pa(n)$ we have a corresponding diagram $T_{\alpha} = \{(i,j) \mid 1 \leqslant j \leqslant \alpha_i\}$ called the \emph{Young diagram} of $\alpha$. A diagram $S$ is then called a \emph{skew diagram} if $S = T_{\alpha}\setminus T_{\beta}$ for some Young diagrams $T_{\beta} \subseteq T_{\alpha}$. We recall that if $S$ is a skew diagram with $|S| = n$ then we have a corresponding character $\psi_S$ of $\mathfrak{S}_n$ called a \emph{skew character}, c.f., \cite[\S7, Example 3]{macdonald:1995:symmetric-functions-and-hall-polys}. The following property of these characters is well known.
\end{pa}

\begin{lem}[{}{see \cite[5.7]{macdonald:1995:symmetric-functions-and-hall-polys}}]\label{lem:skew-chars}
If $S$ and $S'$ are skew diagrams with the same connected components then $\psi_S = \psi_{S'}$.
\end{lem}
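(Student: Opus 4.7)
The plan is to reduce to the analogous statement for skew Schur functions via the Frobenius characteristic correspondence. Under this correspondence, the skew character $\psi_S$ of $\mathfrak{S}_n$ corresponds to the skew Schur function $s_S$, so it suffices to establish that $s_S = s_{S'}$ whenever $S$ and $S'$ have the same multiset of connected components (where each component is regarded as a skew shape up to translation in $\mathbb{N}^2$).

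The key observation is the factorization $s_S = \prod_{i=1}^m s_{S_i}$, where $S_1,\dots,S_m$ are the connected components of $S$. Indeed, a semistandard filling of $S$ must be weakly increasing along rows and strictly increasing down columns, but these inequalities only constrain entries that share a row or a column, and by definition no two cells in distinct connected components do so. Hence a semistandard tableau on $S$ is exactly the data of a tuple of semistandard tableaux on $S_1,\dots,S_m$, and summing the associated monomials gives the product formula. Since each factor $s_{S_i}$ depends only on the skew shape $S_i$ up to translation, the product depends only on the multiset of components; applied to both $S$ and $S'$, this yields $s_S = s_{S'}$ and hence $\psi_S = \psi_{S'}$.

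The only genuine subtlety is ensuring that the notion of ``connected component'' is translation-invariant in the right sense: one must check that $s_T$ is unchanged when the skew diagram $T$ is translated inside $\mathbb{N}^2$, which is immediate from the combinatorial description of $s_T$ in terms of semistandard fillings. Given this, the argument above is routine, and an entirely parallel argument can be given directly on the character side using the Murnaghan--Nakayama formula by noting that border strips are always connected, so any border-strip tableau of $S$ decomposes as a tuple of border-strip tableaux on the components $S_i$, with the signs and heights contributing multiplicatively.
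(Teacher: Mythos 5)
Your argument is correct, but note that the paper does not prove this lemma at all: it simply cites Macdonald's $(5.7)$, which is precisely the factorization $s_S=\prod_i s_{S_i}$ over connected components that you take as your key observation. So in effect you are supplying a proof of the cited fact. Your route is the combinatorial one, via the description of $s_S$ as a generating function for semistandard fillings; Macdonald instead deduces $(5.7)$ from the Jacobi--Trudi determinant $s_{\lambda/\mu}=\det(h_{\lambda_i-\mu_j-i+j})$, whose matrix becomes block-triangular exactly when the diagram disconnects between two rows. Both are standard; yours is arguably more transparent but relies on the tableau formula for skew Schur functions, which in Macdonald's development appears only after $(5.7)$. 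One small gloss to tighten: it is not literally ``by definition'' that cells in distinct connected components never share a row or column --- connectivity is defined via edge-adjacency --- but it follows because the intersection of a skew diagram $T_\alpha\setminus T_\beta$ with any fixed row or column is a contiguous interval of cells, hence lies in a single component. Your closing remark about a parallel Murnaghan--Nakayama argument is also essentially right (border strips are connected, so each removal happens inside a single component), though as stated it is looser than the Schur-function argument, since one must also account for how the parts of the cycle type are distributed among the components; I would keep the symmetric-function factorization as the actual proof.
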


\begin{pa}
For any $k,\ell,n \in \mathbb{N}_0$ we denote by $\B_{k,\ell}(n) \subseteq \C_k \times \C_{\ell}$ the set of all pairs $(\lambda \mid \mu)$ of compositions such that $|\lambda| + |\mu| = n$; we call these \emph{bicompositions} of $n$. Now for each bicomposition $(\lambda \mid \mu) \in \B_{k,\ell}(n)$ we denote by $S_{(\lambda \mid \mu)}$ some (any) skew diagram whose connected components $H_1,\dots,H_r,V_1,\dots,V_s \subseteq S_{(\lambda;\mu)}$ are such that $H_i$ is a horizontal line, resp., $V_j$ is a vertical line, and $(|H_1|,\dots,|H_r|) = \lambda^{\circ}$, resp., $(|V_1|,\dots,|V_s|) = \mu^{\circ}$. It is easy to see that such a diagram exists. By \cref{lem:skew-chars} we then get a well defined character $\psi_{(\lambda \mid \mu)} := \psi_{S_{(\lambda \mid \mu)}}$ of $\mathfrak{S}_n$.
\end{pa}

\begin{exmp}
Consider the bicomposition $(4,0,5;2,3) \in \B_{3,2}(14)$ then an example of a corresponding skew diagram $S_{(4,0,5;2,3)}$ is given by
\begin{center}
\begin{tikzpicture}[node distance=0 cm,outer sep = 0pt]
	\node[esq] (1-1) at (1,1) {};
	\node[esq] (1-2) [right = of 1-1] {};
	\node[esq] (1-3) [right = of 1-2] {};
	\node[esq] (1-4) [right = of 1-3] {};
	\node[esq] (1-5) [right = of 1-4] {};
	\node[esq] (1-6) [right = of 1-5] {};
	\node[esq] (1-7) [right = of 1-6] {};
	\node[sq] (1-8) [right = of 1-7] {};
	\node[sq] (1-9) [right = of 1-8] {};
	\node[sq] (1-20) [right = of 1-9] {};
	\node[sq] (1-21) [right = of 1-20] {};
	
	\node[esq] (2-1) [below = of 1-1] {};
	\node[esq] (2-2) [right = of 2-1] {};
	\node[sq] (2-3) [right = of 2-2] {};
	\node[sq] (2-4) [right = of 2-3] {};
	\node[sq] (2-5) [right = of 2-4] {};
	\node[sq] (2-6) [right = of 2-5] {};
	\node[sq] (2-7) [right = of 2-6] {};
	
	\node[esq] (3-1) [below = of 2-1] {};
	\node[sq] (3-2) [right = of 3-1] {};
	
	\node[esq] (4-1) [below = of 3-1] {};
	\node[sq] (4-2) [below = of 3-2] {};
	
	\node[esq] (5-1) [below = of 4-1] {};
	\node[sq] (5-2) [below = of 4-2] {};
	\node[sq] (6-1) [below = of 5-1] {};
	\node[sq] (7-1) [below = of 6-1] {};
\end{tikzpicture}
\end{center}
\end{exmp}

\begin{pa}\label{pa:ss-kl-tabs}
We now prove \cref{thm:main} but before proceeding we recall some definitions from \cite[2.1]{berele-regev:1987:hook-young-diagrams}. Specifically, let $D = \{1,\dots,k,1',\dots,\ell'\}$ be a totally ordered set with $1 < \cdots < k < 1' < \cdots < \ell'$. If $\alpha \in \Pa(n)$ is a partition and $(\lambda\mid\mu) \in \B_{k,\ell}(n)$ is a bicomposition then we say a function $f : T_{\alpha} \to D$ is a $(k,\ell)$-tableau of shape $\alpha$ and weight $(\lambda\mid\mu)$ if $\lambda_i = |\{x \in T_{\alpha} \mid f(x) = i\}|$ for any $1 \leqslant i \leqslant k$ and $\mu_j = |\{x \in T_{\alpha} \mid f(x) = j'\}|$ for any $1 \leqslant j \leqslant \ell$. As in \cite[2.1]{berele-regev:1987:hook-young-diagrams} we say $f$ is \emph{semistandard} if $T_f = f^{-1}(\{1,\dots,k\})$ is a Young tableau whose rows are weakly increasing and whose columns are strictly increasing and $T_{\alpha}\setminus T_f$ is a skew tableau whose columns are weakly increasing and whose rows are strictly increasing. If $s_{(\lambda\mid\mu)}(\alpha)$ is the number of semistandard $(k,\ell)$-tableaux of shape $\alpha$ and weight $(\lambda\mid\mu)$ then $s_{k,\ell}(\alpha) := \sum_{(\lambda\mid\mu)} s_{(\lambda\mid\mu)}(\alpha)$ is the number of all semistandard $(k,\ell)$-tableaux of shape $\alpha$.
\end{pa}

\begin{lem}\label{prop:irr-decomp}
For any $k,\ell,n \in \mathbb{N}_0$ we have
\begin{equation*}
\Lambda_n^{k,\ell} = \sum_{(\lambda\mid\mu) \in \B_{k,\ell}(n)} \psi_{(\lambda\mid\mu)}.
\end{equation*}
\end{lem}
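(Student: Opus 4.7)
The plan is to move everything across the characteristic map $ch$ (sending $\chi_\alpha \mapsto s_\alpha$) and then identify the two sides as the same symmetric function.

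First, I would compute $ch(\psi_{(\lambda\mid\mu)})$. By \cref{lem:skew-chars} we may choose $S_{(\lambda\mid\mu)}$ so that its connected components are disjoint in both rows and columns; then the well-known product formula for skew Schur functions of such a disconnected diagram (see \cite[\S I.5]{macdonald:1995:symmetric-functions-and-hall-polys}) gives
\begin{equation*}
ch(\psi_{(\lambda\mid\mu)}) \;=\; \prod_{i} s_{(\lambda_i^\circ)} \prod_{j} s_{(1^{\mu_j^\circ})} \;=\; h_{\lambda^\circ}\,e_{\mu^\circ} \;=\; h_\lambda\,e_\mu,
\end{equation*}
using $h_0 = e_0 = 1$, $s_{(m)} = h_m$ and $s_{(1^m)} = e_m$.

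Second, I would expand $h_\lambda e_\mu$ in the Schur basis by iterating Pieri's rule. Writing $h_\lambda e_\mu = h_{\lambda_1}\cdots h_{\lambda_k}\,e_{\mu_1}\cdots e_{\mu_\ell}$, the coefficient of $s_\alpha$ equals the number of chains
\begin{equation*}
\varnothing = \gamma^0 \subseteq \gamma^1 \subseteq \cdots \subseteq \gamma^k \subseteq \gamma^{k+1} \subseteq \cdots \subseteq \gamma^{k+\ell} = \alpha
\end{equation*}
where $\gamma^i/\gamma^{i-1}$ is a horizontal strip with $\lambda_i$ cells for $1 \leqslant i \leqslant k$ and $\gamma^{k+j}/\gamma^{k+j-1}$ is a vertical strip with $\mu_j$ cells for $1 \leqslant j \leqslant \ell$.

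Third, I would set up the standard bijection between such chains and semistandard $(k,\ell)$-tableaux of shape $\alpha$ and weight $(\lambda\mid\mu)$: declare $f(x) = i$ on $\gamma^i/\gamma^{i-1}$ and $f(x) = j'$ on $\gamma^{k+j}/\gamma^{k+j-1}$. The horizontal-strip conditions translate exactly to $T_f = T_{\gamma^k}$ being a Young tableau with weakly increasing rows and strictly increasing columns (weight $\lambda$), while the vertical-strip conditions translate exactly to the filling of $T_\alpha \setminus T_{\gamma^k}$ having strictly increasing rows and weakly increasing columns (weight $\mu$), matching the definition in \cref{pa:ss-kl-tabs}. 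Hence
\begin{equation*}
h_\lambda\,e_\mu \;=\; \sum_{\alpha \in \Pa(n)} s_{(\lambda\mid\mu)}(\alpha)\, s_\alpha.
\end{equation*}

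Summing over $(\lambda\mid\mu) \in \B_{k,\ell}(n)$ and using $s_{k,\ell}(\alpha) = \sum_{(\lambda\mid\mu)}s_{(\lambda\mid\mu)}(\alpha)$ gives $\sum_{(\lambda\mid\mu)} ch(\psi_{(\lambda\mid\mu)}) = ch(\Lambda_n^{k,\ell})$, and the result follows by applying $ch^{-1}$. The only non-formal step is the bijection in paragraph three; since it is essentially the definition of the Kostka numbers (for the unprimed half) combined with its conjugate version (for the primed half), it is routine but is where all the combinatorial content sits.
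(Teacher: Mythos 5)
Your argument is correct and is essentially the paper's proof transported across the characteristic map: the identity $\mathrm{ch}(\psi_{(\lambda\mid\mu)}) = h_\lambda e_\mu$ is the symmetric-function form of the paper's description of $\psi_{(\lambda\mid\mu)}$ as a character induced from a Young subgroup, and your iterated Pieri expansion together with the chain--tableau bijection is precisely the content of the Berele--Regev lemma that the paper cites (and which it notes amounts to Young's rule plus induction on $\ell$). The only real difference is that you supply the combinatorial proof of that lemma rather than citing it.
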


\begin{proof}
It follows from \cite[3.4, 5.1, 5.4, 7.3]{macdonald:1995:symmetric-functions-and-hall-polys} that for any bicomposition $(\lambda\mid\mu) \in \B_{k,\ell}(n)$ we have
\begin{equation*}
\psi_{(\lambda|\mu)} = \Ind_{\mathfrak{S}_{\lambda} \times \mathfrak{S}_{\mu}}^{\mathfrak{S}_n}(\chi_{(\lambda_1)} \boxtimes \cdots \boxtimes \chi_{(\lambda_k)} \boxtimes \chi_{(1^{\mu_1})} \boxtimes \cdots \boxtimes \chi_{(1^{\mu_{\ell}})}),
\end{equation*}
where $\mathfrak{S}_{\lambda} \times \mathfrak{S}_{\mu}$ is the Young subgroup determined by the parts of $\lambda$ and $\mu$. By \cite[Lemma 3.23]{berele-regev:1987:hook-young-diagrams} we have the decomposition of this character into irreducibles is given by
\begin{equation*}
\psi_{(\lambda|\mu)} = \sum_{\alpha \in \Pa(n)} s_{(\lambda\mid\mu)}(\alpha)\chi_{\alpha}.
\end{equation*}
Note that when $\ell = 0$ this statement is just Young's rule and as in \cite{berele-regev:1987:hook-young-diagrams} the general case can be proved easily by induction on $\ell$ using the definition of $(k,\ell)$-tableaux. With this we obtain the desired statement
\begin{equation*}
\sum_{(\lambda\mid\mu) \in \B_{k,\ell}(n)} \psi_{(\lambda\mid\mu)} = \sum_{(\lambda\mid\mu) \in \B_{k,\ell}(n)}\sum_{\alpha \in \Pa(n)} s_{(\lambda\mid\mu)}(\alpha)\chi_{\alpha} = \sum_{\alpha \in \Pa(n)} s_{k,\ell}(\alpha)\chi_{\alpha}.
\end{equation*}
\end{proof}

\begin{proof}[of \cref{thm:main}]
Choose a part $a$ of $\nu$ and let $\hat{\nu} \in \Pa(n-a)$ be the partition obtained by removing the part $a$ from $\nu$ but maintaining the original order. If $\lambda \in \C_k$ is a composition such that $\lambda_i \geqslant a$ then we denote by $\lambda\downarrow_ia \in \C_k$ the composition obtained by replacing $\lambda_i$ with $\lambda_i-a$. By the Murnaghan--Nakayama formula for skew characters, see \cite[2.4.15]{james-kerber:1981:representation-theory-of-the-symmetric-group}, we have that
\begin{equation*}
\psi_{(\lambda\mid\mu)}(\nu) = \sum_{\lambda_i\geqslant a}\psi_{(\lambda\downarrow_ia\mid\mu)}(\hat{\nu}) + \sum_{\mu_j\geqslant a}(-1)^{a-1}\psi_{(\lambda\mid\mu\downarrow_ja)}(\hat{\nu})
\end{equation*}
where the first, resp., second, sum is over all $1 \leqslant i \leqslant k$, resp., $1 \leqslant j \leqslant \ell$, such that $\lambda_i \geqslant a$, resp., $\mu_j \geqslant a$. Indeed, the connected component of the skew diagram $S_{(\lambda\mid\mu)}$ labelled by $\lambda_i$, resp., $\mu_j$, has an $a$-hook of leg length $0$, resp., $(-1)^{a-1}$, if and only if $\lambda_i \geqslant a$, resp., $\mu_j \geqslant a$. Now clearly every bicomposition $(\lambda'\mid\mu') \in \B_{k,\ell}(n-a)$ arises from exactly $k+\ell$ bicompositions $(\lambda\mid \mu) \in $ via the process $\downarrow_ia$ and so by \cref{prop:irr-decomp} we have
\begin{equation*}
\Lambda_n^{k,\ell}(\nu) = \sum_{(\lambda\mid\mu) \in \B_{k,\ell}(n)} \psi_{(\lambda\mid\mu)}(\nu) = (k+(-1)^{a-1}\ell)\Lambda_{n-a}^{k,\ell}(\hat{\nu}).
\end{equation*}
An easy induction argument completes the proof.
\end{proof}

\begin{rem}\label{rem:pieri}
Recall that the decomposition of $\psi_{(\lambda\mid\mu)}$ into irreducible characters is given by the Littlewood--Richardson coefficients, see \cite[5.3]{macdonald:1995:symmetric-functions-and-hall-polys}. If $k = \ell = 1$ then a simple application of Pieri's rule shows that
\begin{equation*}
\psi_{(a\mid n-a)} = \begin{cases}
\chi_{(1^n)} &\text{if }a = 0,\\
\chi_{(a,1^{n-a})} + \chi_{(a+1,1^{n-a-1})} &\text{if }0 < a < n,\\
\chi_{(n)} &\text{if }a = n.
\end{cases}
\end{equation*}
This gives an alternative way to see that $\Lambda_n^{1,1} = 2\Gamma_n$.
\end{rem}

\setstretch{0.96}
\renewcommand*{\bibfont}{\small}
\printbibliography
\end{document}